\renewcommand{\PrintDOI}[1]{\doi{#1}}
\newtheorem{theorem}{Theorem}[section]
\newtheorem{lemma}[theorem]{Lemma}
\newtheorem{proposition}[theorem]{Proposition}
\newtheorem{fact}[theorem]{Fact}
\newtheorem{conjecture}[theorem]{Conjecture}
\newcommand{\oldqed}{}
\def\endofClaim{\hfill\scalebox{.6}{$\Box$}}
\newcommand{\cH}{\mathcal{H}}
\newcommand{\cG}{\mathcal{G}}
\newcommand{\cF}{\mathcal{F}}
\newcommand{\cS}{\mathcal{S}}
\newcommand{\cK}{\mathcal{K}}
\newcommand{\EE}{\mathbb{E}}
\newcommand{\eps}{\varepsilon}
\let\epsilon\varepsilon
\newcommand{\Ktm}{K_t(1,\ldots,1,40)}
\newcommand{\dcup}{\dot\cup}
\renewcommand{\subset}{\subseteq}
\newcommand*\patchAmsMathEnvironmentForLineno[1]{%
\expandafter\let\csname old#1\expandafter\endcsname\csname #1\endcsname
\expandafter\let\csname oldend#1\expandafter\endcsname\csname end#1\endcsname
\renewenvironment{#1}%
{\linenomath\csname old#1\endcsname}%
{\csname oldend#1\endcsname\endlinenomath}}%
\newcommand*\patchBothAmsMathEnvironmentsForLineno[1]{%
\patchAmsMathEnvironmentForLineno{#1}%
\patchAmsMathEnvironmentForLineno{#1*}}%
\title{Clique-factors in sparse pseudorandom graphs}
\author[J. Han]{Jie Han}
\author[Y. Kohayakawa]{Yoshiharu Kohayakawa}
\author[P. Morris]{Patrick Morris}
\author[Y. Person]{Yury Person}
\thanks{%
  JH was supported by FAPESP (2014/18641-5, 2013/03447-6).
  YK was partially supported by FAPESP (2013/03447-6) and
  CNPq (310974/2013-5, 311412/2018-1, 423833/2018-9).
  PM is supported by a Leverhulme Trust Study Abroad
  Studentship (SAS-2017-052$\backslash$9).  YP is supported by the
  Carl Zeiss Foundation.  The cooperation of the authors was supported
  by a joint CAPES-DAAD PROBRAL project (Proj.\ no.~430/15, 57350402,
  57391197).
  FAPESP is the S\~ao Paulo Research Foundation.  CNPq is the National
  Council for Scientific and Technological Development of Brazil.%
}
\address{Department of Mathematics, University of Rhode Island, 5 Lippitt Road, Kingston, RI, 02881, USA}
\email{jie\_han@uri.edu}
\address{Instituto de Matem\'atica e Estat\'istica, Universidade de
  S\~ao Paulo, Rua do Mat\~ao 1010, 05508-090 S\~ao Paulo, Brazil}
\email{yoshi@ime.usp.br}
\address{Institut f\"ur Mathematik, Freie Universit\"at Berlin, Arnimallee 3, 14195 Berlin, Germany and Berlin Mathematical School, Germany}
\email{pm0041@mi.fu-berlin.de}
\address{Institut f\"ur Mathematik, Technische Universit\"at Ilmenau, 98684 Ilmenau, Germany}
\email{yury.person@tu-ilmenau.de}
\begin{document}
\shortdate
\yyyymmdddate
\settimeformat{ampmtime}
\footskip=28pt

\begin{abstract}
  We prove that for any $t\ge 3$ there exist constants $c>0$ and~$n_0$
  such that any $d$-regular $n$-vertex graph~$G$ with
  $t\mid n\geq n_0$ and second largest eigenvalue in absolute
  value~$\lambda$ satisfying $\lambda\le c d^{t}/n^{t-1}$ contains a
  $K_t$-factor, that is, vertex-disjoint copies of $K_t$ covering
  every vertex of~$G$. The result generalizes to broader setting of jumbled graphs, which were introduced by Thomason in the eighties.
\end{abstract}

\maketitle

\section{Introduction}
An $(n,d,\lambda)$-graph is an $n$-vertex $d$-regular graph whose
second largest eigenvalue in absolute value is at most~$\lambda$.
These graphs are central objects in extremal, random and algebraic
graph theory. The interest in these graphs lies in the fact that
various pseudorandom properties can be inferred from the value of
$\lambda$, in terms of the other parameters. For example, if
$\lambda \ll d$ then such a graph has the property that its edges are
`distributed' uniformly, which is one of the essential properties
exploited in random graphs and the regularity method from extremal
graph theory. More precisely, the following inequality, called the
\emph{expander mixing lemma} (see e.g.~\cite{AC88}), makes this
quantitative:
\begin{equation}
  \label{eq:EML.1}
  \left|e(A,B)-{d\over n}|A||B|\right|<\lambda\sqrt{|A||B|},
\end{equation}
whenever~$A$ and~$B$ are vertex subsets of an
$(n,d,\lambda)$-graph~$G$, where~$e(A,B)$ denotes the number of edges
between~$A$ and~$B$ (edges in $A\cap B$ are counted twice).  An
excellent introduction to the study of $(n,d,\lambda)$-graphs is given
in a survey of Krivelevich and Sudakov~\cite{KS06}.  The emphasis
there and throughout the field is on the interplay between the
parameters~$n$, $d$ and~$\lambda$ and graph properties of interest;
more precisely, given some property, one wishes to establish best
possible conditions on~$n$, $d$ and~$\lambda$ that ensure that any
$(n,d,\lambda)$-graph with parameters satisfying such conditions has
the property.

In this note we will only be concerned with conditions on the
parameters~$n$, $d$ and~$\lambda$ that guarantee the existence of
certain spanning structures, i.e., subgraphs that occupy the whole
vertex set (and have minimum degree at least one). Thus, we will be
somewhat selective in our discussion. In particular, we are interested
in whether, for some fixed~$t\geq 3$, our $(n,d,\lambda)$-graph
contains a family of vertex-disjoint copies of~$K_t$ covering each
vertex exactly once, which we call a
\textit{$K_t$-factor}.\footnote{This is also sometimes called a
  \textit{perfect $K_t$-matching} or a \textit{perfect $K_t$-tiling}
  in the literature.}  We remark that the case $d=\Theta(n)$ is
well-understood since the existence of bounded degree spanning graphs
in $(n,d,\lambda)$-graphs follows from the celebrated Blow-up lemma of
Koml\'os, S\'ark\"ozy and Szemer\'edi~\cite{KSS_bl}; see the
discussion in e.g.~\cite{HKP18a}.  Moreover, a sparse blow-up lemma
for subgraphs of $(n,d,\lambda)$-graphs~\cite{ABHKP16} gives general
nontrivial conditions for the existence of a given bounded degree
spanning subgraph in the case~$d=o(n)$.  These conditions\footnote{For reference, the main result in ~\cite{ABHKP16} requires $\lambda<c_3d^4n^{-3}$ for the existence of a $K_3$-factor and $\lambda<c_t\left(d/n\right)^{\frac{3(t-1)+1}{2}}n$ for~$t\geq 4$, where the $c_t=c(t)>0$ are appropriately chosen constants.} are stronger
than those discussed in what follows (and hence, as expected, the
general results in~\cite{ABHKP16} are weaker than the ones below). 

While extremal and random graph theory provide tools to answer
questions in this line of research positively, one is naturally
interested in the asymptotic tightness of the obtained results.  This
requires constructions of special pseudorandom graphs, most of the
known examples of which come from algebraic graph theory or geometry.
For our study here, there is essentially one prime example of such a
construction, due to Alon~\cite{Alon94}, who gave $K_3$-free
$(n,d,\lambda)$-graphs with $d=\Theta(n^{2/3})$ and
$\lambda=\Theta(n^{1/3})$.  Krivelevich, Sudakov and
Szab\'o~\cite{KSS04} then extended these to the whole possible range
of $d=d(n)$, constructing $K_3$-free $(n,d,\lambda)$-graphs with
$\lambda=\Theta(d^2/n)$ for all $\Omega(n^{2/3})\leq d \leq n$.  Alon's construction is an important
milestone in the study of $(n,d,\lambda)$-graphs.  It provides a rare
example of something that is reminiscent of threshold phenomena in
random graphs in the context of $(n,d,\lambda)$-graphs: if
$\lambda\le 0.1 d^2/n$, then any vertex of an $(n,d,\lambda)$-graph is
contained in a copy of $K_3$, while there are $K_3$-free
$(n,d,\lambda)$-graphs with $\lambda=\Omega(d^2/n)$.  Even more is
true: as proved by Krivelevich, Sudakov and Szab\'o~\cite{KSS04},
$(n,d,\lambda)$-graphs with $\lambda\le 0.1 d^2/n$ contain a
fractional triangle-factor.\footnote{A fractional $K_t$-factor in a
  graph $G$ is a function $f:\cK_t\to\mathbb{R}_+$, where $\cK_t$ is
  the set of copies of $K_t$ in $G$, such that
  $\sum_{v\in K\in \cK_t}f(K)=1$ for all $v\in V(G)$.}  A natural
conjecture from the same authors~\cite{KSS04} states the following.

\begin{conjecture}[Conjecture~7.1 in~\cite{KSS04}]
  \label{conj:KSS}
  There exists an absolute constant $c>0$ such that every
  $(n,d,\lambda)$-graph~$G$ on $n\in 3\mathbb{N}$ vertices with
  $\lambda\le cd^2/n$ has a triangle-factor.
\end{conjecture}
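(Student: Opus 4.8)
The plan is to prove the conjecture by the \emph{absorption method}: reserve a robust absorbing structure~$\cB$ at the start, cover $V(G)\setminus\cB$ by a triangle tiling leaving only a tiny leftover set $L$ with $3\mid|L|$, and then absorb $L$ into $\cB$. The delicate point throughout is to identify which pseudorandom counts survive at the conjectured threshold $\lambda\le cd^2/n$. From the expander mixing lemma~\eqref{eq:EML.1} with $A=B=N(v)$ one gets $e(N(v))=(1\pm\delta)\tfrac{d^3}{2n}$ with $\delta\to0$ as $c\to0$, so \emph{every vertex lies in $\Theta(d^3/n)$ triangles}, and more generally the triangle count of $v$ restricted to any linear-sized vertex set is controlled. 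By contrast, the number of triangles on a fixed \emph{edge} $uv$ equals the codegree $|N(u)\cap N(v)|=\tfrac{d^2}{n}\pm\lambda\sqrt d$, whose error term swamps the main term at $\lambda=\Theta(d^2/n)$. Recognising that only \emph{vertex-rooted} counts are reliable, and never edge-rooted ones, is the organising principle of the whole argument.

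For the near-perfect tiling I would apply the semi-random (nibble) method to the $3$-uniform hypergraph $\cH$ whose edges are the triangles of $G$. Here every vertex has degree $\Theta(d^3/n)$ and any two vertices lie together in at most $\tfrac{d^2}{n}+\lambda\sqrt d=o(d^3/n)$ common triangles, so the codegree is of strictly smaller order than the degree and the Pippenger--Spencer/R\"odl nibble yields a triangle tiling covering all but an $o(n)$ fraction of the vertices. A short clean-up driven by the fractional triangle-factor of~\cite{KSS04} then shrinks the leftover $L$ to size at most $\eta n$ for any prescribed constant $\eta>0$, with the leftover vertices remaining typical in the untouched part.

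The crux is the absorber, and the difficulty is that the classical triangle gadgets all count extensions of a pinned \emph{pair} and therefore dissolve at the threshold. I would instead try to build the absorber from \emph{single-vertex-rooted} gadgets: an augmenting unit for a vertex $v$ is a constant-size family of reserved triangles that can be reorganised to swallow $v$, certified using only the robust quantity $e(N(v)\cap S)=\Theta\bigl((|S|/n)^2\,d^3/n\bigr)$ for linear-sized $S$, i.e. by a count of triangles through $v$ taken over a large candidate set rather than by any count rooted at a fixed edge. Having shown that each $v$ admits a large supply of such units, I would select $\Omega(\eta n)$ of them pairwise vertex-disjoint and arrange them on a bipartite template with a resilient perfect-matching property in the sense of Montgomery, producing a reservoir $\cB$ with $|\cB|=O(\eta n)$ such that $\cB\cup L$ has a perfect triangle tiling for every admissible $L$. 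Reserving $\cB$ before the nibble and absorbing the leftover at the end then yields the desired factor.

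The main obstacle is exactly this absorber construction under vertex-rooted information alone, which is where the entire gap between the conjectured $\lambda\le cd^2/n$ and the paper's $\lambda\le cd^3/n^2$ resides. Every augmenting unit ultimately forces two of its vertices to be adjacent with a prescribed common neighbour, i.e. it hides an edge-rooted triangle count, and at the threshold one cannot guarantee a positive such count for \emph{each} individual choice --- only on average over a linear-sized set. The decisive step is therefore to design the gadget and the disjointification so that every local count one ever invokes is an average over a set of size $\Omega(n)$, where EML is sharp, and never a count at a fixed edge, where it fails; equivalently, to make the template robust enough that it survives the loss of the constant fraction of gadget slots that happen to sit on triangle-poor edges. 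I expect this tension between template robustness and weak counting to be the heart of the matter, with the nibble and the template embedding routine once a gadget supported purely on vertex-rooted counts is in hand.
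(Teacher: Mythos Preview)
The statement you are attempting to prove is a \emph{conjecture}; the paper does not prove it, and indeed it is explicitly stated as open. The paper's main theorem (Theorem~\ref{thm:main}) establishes only the weaker bound $\lambda\le cd^3/n^2$ for triangle-factors, and the discussion in the introduction makes clear that the best known result toward the conjecture at the time was Nenadov's $\lambda\le cd^2/(n\log n)$. There is therefore no ``paper's own proof'' against which to compare your attempt.

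As for the content of your proposal, it is not a proof but a research outline, and you say so yourself. Your final paragraph is an honest admission that the decisive step --- building an absorber using only vertex-rooted counts at the threshold $\lambda=\Theta(d^2/n)$ --- is exactly the obstacle you do not know how to overcome. Everything preceding that (the nibble for a near-perfect tiling, the fractional-factor clean-up, the Montgomery template framework) is either already known or standard machinery; none of it touches the actual difficulty. The specific gap is the one you name: any absorbing gadget for a triangle must at some point force a designated pair to have a common neighbour in a set of size $o(n)$, and at $\lambda=\Theta(d^2/n)$ the expander mixing lemma gives no control on $|N(u)\cap N(v)\cap S|$ when $|S|=o(n)$. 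Your suggestion to ``make the template robust enough that it survives the loss of the constant fraction of gadget slots that happen to sit on triangle-poor edges'' is a hope, not a construction, and nothing in the proposal indicates how to realise it. Until that step is carried out, the conjecture remains open and your proposal is a plan rather than a proof.
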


The first result in this direction was given by Krivelevich, Sudakov
and Szab\'o~\cite{KSS04}, who proved that it suffices to impose
$\lambda\leq cd^3/(n^2\log n)$ for some absolute constant~$c>0$.  This
was improved by Allen, B\"ottcher, H\`an and two of the current
authors~\cite{ABHKP17} to $\lambda\leq cd^{5/2}/n^{3/2}$ for
some~$c>0$.  In fact, the result in~\cite{ABHKP17} is that this
condition on~$\lambda$ is enough to guarantee the appearance of
squares of Hamilton cycles (the square of a Hamilton cycle is obtained
by connecting distinct vertices at distance at most~$2$ in the cycle).
Another piece of evidence in support of Conjecture~\ref{conj:KSS} is a
result in~\cite{HKP18a, HKP18b} that states that, under the condition
$\lambda\le (1/600) d^2/n$, any $(n,d,\lambda)$-graph~$G$ with~$n$
sufficiently large contains a `near-perfect $K_3$-factor'; in fact,
$G$~contains a family of vertex-disjoint copies of $K_3$ covering all
but at most $n^{647/648}$ vertices of~$G$.  Very recently,
Nenadov~\cite{Nen18} proved that $\lambda\leq cd^2/(n\log n)$ for some
constant~$c>0$ yields a $K_3$-factor.\footnote{
Nenadov studied a larger class of graphs, which contains
  $(n,d,\lambda)$-graphs as a special case.  Our result also holds in this class. For details, see the concluding   remarks at the end of this note.
}

With regard to $K_t$-factors for general $t\in \mathbb{N}$,
Krivelevich, Sudakov and Szab\'o~\cite{KSS04} remark that the
condition $\lambda\le cd^{t-1}/n^{t-2}$ yields, for appropriate
$c=c(t)$, a fractional $K_t$-factor.  Although there is, alas, no
known suitable generalization of Alon's construction to $K_t$-free
graphs, this condition on~$\lambda$ may be seen as a benchmark in the
study of $K_t$-factors in $(n,d,\lambda)$-graphs.  The first
nontrivial result for this study was the result in~\cite{ABHKP17}
showing that $\lambda\leq cd^{3t/2}n^{1-3t/2}$ for some~$c=c(t)>0$ is
sufficient to guarantee $t$-powers of Hamilton cycles (and thus
$K_{t+1}$-factors when $(t+1)\mid n$).  The aforementioned result of
Nenadov~\cite{Nen18} generalizes to $K_t$-factors, giving
the condition $\lambda\leq cd^{t-1}/(n^{t-2}\log n)$ for some
constant~$c=c(t)>0$.  The purpose of this note is to present a proof
that, under the condition $\lambda\leq cd^t/n^{t-1}$ for some
suitable~$c=c(t)>0$, any $(n,d,\lambda)$-graph contains a
$K_t$-factor.  More precisely, we prove the following.

\begin{theorem}
  \label{thm:main}
  Given an integer $t\ge 3$, there exist $c>0$ and $n_0>0$ such
  that every $(n, d, \lambda)$-graph with $n\ge n_0$ and
  $\lambda \le c d^t/n^{t-1}$ contains a $K_t$-factor.
\end{theorem}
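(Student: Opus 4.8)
The plan is to use the absorption method. Writing $\cK_t$ for the set of copies of $K_t$ in $G$, the proof rests on three ingredients: a package of pseudorandom estimates extracted from~\eqref{eq:EML.1}; an almost-spanning $K_t$-tiling of $G$; and the construction of a sublinear absorbing set that swallows whatever that tiling misses. The full strength of the hypothesis $\lambda \le cd^t/n^{t-1}$ is needed only in the third ingredient, while the first two go through as soon as $G$ and the subgraphs it induces on neighbourhoods are sufficiently pseudorandom. For the estimates: \eqref{eq:EML.1} gives $e(A,B)=(1+o(1))(d/n)|A||B|$ whenever $|A|,|B|\gg\lambda n/d=\Theta(d^{t-1}/n^{t-2})$, and combining $\lambda\le cd^t/n^{t-1}$ with the trivial bound $\lambda\ge\sqrt{d(1-d/n)}$ forces $d$ to be a large power of $n$, in particular $d\gg\sqrt n$. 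Hence each neighbourhood graph $G[N(v)]$ is approximately $(d,\,d^2/n,\,O(\lambda))$-pseudorandom, and for all but an $o(1)$-fraction of pairs $u,v$ the common-neighbourhood graph $G[N(u)\cap N(v)]$ is approximately $(d^2/n,\,d^3/n^2,\,O(\lambda))$-pseudorandom. Feeding these into the recursive argument behind the clique results of~\cite{KSS04} yields: every vertex lies in roughly the expected number of copies of $K_t$; the hypergraph $\cK_t$ is nearly regular with pair-codegrees an $O(1/d)$-fraction of its degree; and, crucially, for a typical pair $u,v$ the graph $G[N(u)\cap N(v)]$ still contains copies of $K_{t-1}$ --- and this last assertion has threshold precisely $\lambda\lesssim d^t/n^{t-1}$.

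\emph{Almost-perfect tiling.} Because $\cK_t$ is nearly regular with codegrees a vanishing fraction of the degree (here $d\gg\sqrt n$ is used), a R\"odl-nibble / Pippenger--Spencer argument produces a $K_t$-tiling of $G$ covering all but $o(n)$ vertices. Iterating this on the uncovered set, adjoined each round to a small random buffer that inherits enough pseudorandomness --- an iterative scheme in the spirit of the near-perfect $K_3$-factor results of~\cite{HKP18a, HKP18b}, carried out here for every $t$ --- shrinks the leftover to $O(n^{1-\gamma})$ vertices for a suitable constant $\gamma=\gamma(t)>0$.

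\emph{Absorption.} Before running the tiling we set aside an absorbing set $A$ of size $n^{1-\Omega(1)}$ with $t\mid|A|$, chosen so that $G[A]$ has a $K_t$-factor and so does $G[A\cup W]$ for every $W\subseteq V(G)\setminus A$ with $t\mid|W|$ and $|W|\le n^{1-\gamma}$; since $t\mid|A|$, the leftover $L$ of the tiling of $G-A$ satisfies $t\mid|L|$, so absorbing $L$ into $A$ completes a $K_t$-factor. The atomic gadget is a \emph{swapper}: for two vertices $u,v$, a copy of $K_{t-1}$ on a set $S\subseteq N(u)\cap N(v)$, so that replacing the clique $S\cup\{u\}$ by $S\cup\{v\}$ in a tiling exchanges $u$ for $v$ without disturbing divisibility. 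By the estimates above, for every one of the (almost all) ``good'' pairs $u,v$ such swappers exist --- and in enough supply to be found greedily and pairwise vertex-disjointly --- and it is exactly here that $\lambda\le cd^t/n^{t-1}$ enters. One then realises the edges of a bounded-degree \emph{robustly matchable} bipartite template by vertex-disjoint swappers (a distributive-absorption structure, as in recent work on factors in pseudorandom graphs; cf.~\cite{Nen18}): reserved vertex sets play the roles of the template's two classes, the template uses only good pairs, and its robust matchability --- a perfect matching surviving the deletion of any few vertices --- turns the leftover $L$ into a choice of swappers whose net effect absorbs $L$ exactly.

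\emph{Main obstacle.} The crux is the absorption step. A single swapper absorbs nothing on its own, merely trading $u$ for $v$, so the real work is the template combinatorics that marshals a whole family of swaps to absorb an arbitrary small set with no residue, all the while keeping the swappers vertex-disjoint, numerous, and present for every pair they must serve. This last requirement is exactly what breaks once $\lambda$ reaches the order of $d^t/n^{t-1}$: the common-neighbourhood graphs $G[N(u)\cap N(v)]$ then cease to be guaranteed to contain a copy of $K_{t-1}$, and the gadget disappears. Two subsidiary points are the non-uniformity in the estimates --- one deletes a small set of atypical vertices first and checks that tiling and absorption survive this --- and pushing the almost-perfect tiling down to a polynomially small leftover rather than merely $o(n)$, which is what lets $A$ be sublinear.
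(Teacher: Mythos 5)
Your overall architecture --- an almost-perfect tiling plus a distributive absorber built from a bounded-degree robustly matchable template, with the observation that the exponent $t$ in $\lambda\le cd^t/n^{t-1}$ is exactly what guarantees copies of $K_{t-1}$ inside sets of the size of a common neighbourhood $\approx d^2/n$ --- is the same as the paper's, and you have correctly located where the hypothesis is spent. But the absorption step, which you yourself call the crux, has a genuine gap: you never say how a leftover vertex $v\in L$ actually enters the tiling. Your atomic gadget is a swapper for a \emph{pair} $u,v$, i.e.\ a reserved $(t-1)$-clique $S\subseteq N(u)\cap N(v)$; such an $S$ must be set aside before the tiling is run, yet $L$ is only revealed afterwards and is adversarial, so you cannot pre-reserve a disjoint swapper for every pair (leftover vertex, template vertex) --- that would cost far more than $n$ vertices --- and restricting to ``good pairs'' does not help, since nothing prevents $L$ from containing vertices that are atypical relative to your reserved sets. (``Delete atypical vertices first'' does not resolve this: those vertices still need to be covered by the factor.) The paper's design avoids the problem by using swappers only \emph{internally} to the template (between the pre-chosen vertices $a_{ij}$ and $z_j$), and by interfacing with $L$ differently: the flexible set $Z_1$ is chosen at random so that \emph{every} vertex of $G$ has $\deg(v,Z_1)\ge d|Z_1|/(2n)$ (Lemma~\ref{lem:1}), which by Fact~\ref{fact:cliquecount} puts a copy of $K_{t-1}$ in $N(v)\cap Z_1$ for every $v$; each leftover vertex then consumes $t-1$ vertices of $Z_1$, these choices are made disjoint via the Aharoni--Haxell theorem (Theorem~\ref{AhHa}), and the template's robust matchability cleans up whatever remains of $Z_1$. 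Your write-up contains neither the ``for every vertex'' degree guarantee into the flexible set nor any disjoint-representatives argument, and without them the absorber does not close.

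A secondary, quantitative problem: you insist on an absorber of size $n^{1-\Omega(1)}$, which is what forces you into the R\"odl-nibble-plus-iteration machinery to shrink the leftover to $n^{1-\gamma}$. But the worst-case guarantee that every vertex finds a $K_{t-1}$ in its neighbourhood within the flexible set requires $|N(v)\cap Z_1|\gtrsim cd^2/n$, hence $|Z_1|=\Omega(d)$ --- not sublinear of a prescribed polynomial order when $d$ is close to linear (the paper only excludes $d/n>\eps/2t$ via the Blow-up Lemma). The paper simply takes the absorber of size $\Theta(td)$ and then the nibble is unnecessary: since any $2^tcd$ vertices span a $K_t$ (Fact~\ref{fact:cliquecount}), a plain greedy removal already leaves only $O(\eps^2 d)$ uncovered vertices, exactly the scale the absorber handles. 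So the nibble, the iteration scheme, and the codegree estimates for the hypergraph of $K_t$-copies are all detours; the load-bearing missing piece is the interface between the leftover and the flexible set described above.
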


If~$d\geq cn/\log n$ for some suitable~$c=c(t)>0$ and~$t\geq4$, then
the condition in Theorem~\ref{thm:main} is the weakest that is
currently known to imply the existence of $K_t$-factors.
Theorem~\ref{thm:main}, first announced in~\cite{HKP18a}, was obtained
independently of Nenadov's result.  Although both approaches 
adopt absorption techniques, there are few similarities in the key ideas of the two proofs. Indeed, both arguments work by finding a suitably defined absorbing structure which can complete a $K_t$-factor in many ways but the two structures used are clearly distinct and the heart of each proof lies in defining and proving the existence of the desired structure in the host graph. Intuitively, it seems the absorbing structure used by Nenadov~\cite{Nen18} is sparser than ours which explains why it is more effective in the sparse range.   
 
We use standard notation from graph theory, see e.g.~\cite{West}. We
will omit floor and ceiling signs in order not to clutter the
arguments.

\section{Tools}

\subsection{Properties of $(n,d,\lambda)$-graphs}
We begin by giving some basic properties of $(n,d,\lambda)$-graphs.
Some of these are well known and used throughout the study of
$(n,d,\lambda)$-graphs whilst others are specifically catered to our
purposes here.

\begin{theorem}[Expander mixing lemma~\cite{AC88}]
  \label{thm:EML}
  If $G$ is an $(n,d,\lambda)$-graph and $A$, $B\subseteq V(G)$, then
  \begin{equation}
    \label{eq:EML}
    \left|e(A,B)-\frac dn|A||B|\right|<\lambda\sqrt{|A||B|}.    
  \end{equation}
\end{theorem}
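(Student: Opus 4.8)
The plan is to run the standard spectral argument via the adjacency matrix of~$G$. Let $M$ be the $n\times n$ adjacency matrix of~$G$; it is real and symmetric, so it admits an orthonormal basis of eigenvectors $v_1,\dots,v_n\in\RR^n$ with real eigenvalues $\mu_1\ge\mu_2\ge\dots\ge\mu_n$. Since $G$ is $d$-regular, every row of~$M$ sums to~$d$, so the all-ones vector $\mathbf{1}$ satisfies $M\mathbf{1}=d\mathbf{1}$, and $d$ is the largest eigenvalue in absolute value; I may therefore take $\mu_1=d$ and $v_1=\mathbf{1}/\sqrt n$. By the definition of an $(n,d,\lambda)$-graph, $|\mu_i|\le\lambda$ for every $i\ge 2$, which is the only information about the spectrum I will need beyond the top eigenpair.

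First I would write $e(A,B)$ as a bilinear form in the characteristic vectors $\mathbf{1}_A,\mathbf{1}_B\in\{0,1\}^n$ of~$A$ and~$B$, namely $e(A,B)=\mathbf{1}_A^{\mathsf T}M\,\mathbf{1}_B=\sum_{u\in A,\,w\in B}M_{uw}$; this is precisely the convention in which an edge lying inside $A\cap B$ is counted twice, matching the statement. Expanding $\mathbf{1}_A=\sum_i\alpha_i v_i$ and $\mathbf{1}_B=\sum_i\beta_i v_i$ in the eigenbasis, orthonormality gives $e(A,B)=\sum_i\mu_i\alpha_i\beta_i$. The top coefficients are $\alpha_1=\langle\mathbf{1}_A,v_1\rangle=|A|/\sqrt n$ and $\beta_1=|B|/\sqrt n$, so the $i=1$ term equals $d\alpha_1\beta_1=(d/n)|A||B|$, which is exactly the main term to be subtracted off.

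It then remains to bound the tail. Separating the top term yields $e(A,B)-(d/n)|A||B|=\sum_{i\ge2}\mu_i\alpha_i\beta_i$, and since $|\mu_i|\le\lambda$ for $i\ge2$, the triangle inequality and Cauchy--Schwarz give $\big|\sum_{i\ge2}\mu_i\alpha_i\beta_i\big|\le\lambda\sum_{i\ge2}|\alpha_i||\beta_i|\le\lambda\big(\sum_{i\ge2}\alpha_i^2\big)^{1/2}\big(\sum_{i\ge2}\beta_i^2\big)^{1/2}$. The one point deserving care, and the source of the \emph{strict} inequality, is to use Parseval in the sharp form $\sum_{i\ge2}\alpha_i^2=\|\mathbf{1}_A\|^2-\alpha_1^2=|A|-|A|^2/n=|A|(1-|A|/n)$, and likewise for~$B$. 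This refines the bound to $\lambda\sqrt{|A||B|}\cdot\sqrt{(1-|A|/n)(1-|B|/n)}$; since the last radical is strictly below~$1$ once $A$ and $B$ are nonempty, the strict inequality of~\eqref{eq:EML} follows. I do not anticipate any genuine obstacle: the entire content is the spectral decomposition and the uniform bound $|\mu_i|\le\lambda$ on the nontrivial eigenvalues, and the only subtlety is retaining the $|A|^2/n$ correction so as to deliver the strict rather than merely weak inequality.
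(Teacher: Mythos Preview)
Your argument is the standard spectral proof of the expander mixing lemma and is correct. Note, however, that the paper does not actually prove Theorem~\ref{thm:EML}: it is quoted with a citation to~\cite{AC88} and used as a black box throughout, so there is no ``paper's own proof'' to compare against---your write-up simply supplies what the paper omits.

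One small remark on the strict inequality you took care to justify: as stated, \eqref{eq:EML} fails when $A=\emptyset$ or $B=\emptyset$ (both sides vanish), and your argument via the factor $\sqrt{(1-|A|/n)(1-|B|/n)}<1$ also tacitly needs $\lambda>0$. The latter does hold for any $d$-regular graph with $1\le d\le n-1$, since $\sum_{i\ge 2}\mu_i^2=\operatorname{tr}(M^2)-d^2=nd-d^2>0$. These are harmless edge cases that never arise in the paper's applications.
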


\begin{proposition}[Proposition~2.3
  in~\cite{KSS04}]\label{prop:lambda}
  Let $G$ be an $(n,d,\lambda)$-graph with~$d\le n/2$.  Then
  $\lambda\ge \sqrt{d/2}$.
\end{proposition}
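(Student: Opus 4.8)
The plan is to work directly with the spectrum of the adjacency matrix~$A$ of~$G$ and exploit two elementary trace identities. Write the eigenvalues of~$A$ as $d=\mu_1\ge\mu_2\ge\dots\ge\mu_n$, so that $\lambda=\max\{|\mu_2|,|\mu_n|\}$ and in particular $|\mu_i|\le\lambda$ for every $i\ge2$. The first identity I would use is $\sum_{i=1}^n\mu_i=\operatorname{tr}(A)=0$ (no loops), which I actually won't need, and the key one is $\sum_{i=1}^n\mu_i^2=\operatorname{tr}(A^2)$. Since $(A^2)_{vv}$ counts closed walks of length~$2$ from~$v$, which is exactly $\deg(v)=d$, we get $\operatorname{tr}(A^2)=nd$.

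Next I would isolate the top eigenvalue: from $d^2+\sum_{i=2}^n\mu_i^2=nd$ we obtain $\sum_{i=2}^n\mu_i^2=d(n-d)$. There are $n-1$ summands here, each bounded above by $\lambda^2$, so
\begin{equation*}
  (n-1)\lambda^2\ \ge\ \sum_{i=2}^n\mu_i^2\ =\ d(n-d).
\end{equation*}
Now I invoke the hypothesis $d\le n/2$, which gives $n-d\ge n/2$, hence $d(n-d)\ge dn/2$, and therefore
\begin{equation*}
  \lambda^2\ \ge\ \frac{d(n-d)}{n-1}\ \ge\ \frac{dn}{2(n-1)}\ \ge\ \frac{dn}{2n}\ =\ \frac d2,
\end{equation*}
using $n-1<n$ in the last step. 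Taking square roots yields $\lambda\ge\sqrt{d/2}$, as required.

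There is no genuine obstacle here: the argument is a two-line spectral computation. The only points that need the smallest amount of care are the combinatorial interpretation of $\operatorname{tr}(A^2)$ (to see that it equals~$nd$) and making sure the constants line up — specifically that replacing $n-1$ by~$n$ in the denominator and using $n-d\ge n/2$ together exactly absorb the factor~$2$. If one wanted a cleaner bound one could note $\lambda^2\ge d(n-d)/(n-1)$ holds without the restriction $d\le n/2$, and the stated form is just the convenient specialization used elsewhere in the paper.
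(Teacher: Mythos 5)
Your argument is correct: the trace identity $\operatorname{tr}(A^2)=nd$, the splitting-off of the top eigenvalue $d$, and the bound $(n-1)\lambda^2\ge d(n-d)\ge dn/2$ together give $\lambda^2\ge dn/(2(n-1))>d/2$ exactly as claimed. The paper states this proposition without proof, citing Proposition~2.3 of~\cite{KSS04}, and your two-line spectral computation is precisely the standard argument given there, so there is nothing to add.
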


\begin{fact}
  \label{fact:d_bound}
  Let $G$ be an $(n,d,\lambda)$-graph with $d\le n/2$.  Suppose
  $\lambda\le d^t/n^{t-1}$ for some $t\ge2$.  Then
  $d\ge n^{1-1/(2t-1)}/2$.
\end{fact}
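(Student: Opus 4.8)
The plan is to combine the lower bound on~$\lambda$ from Proposition~\ref{prop:lambda} with the hypothesised upper bound $\lambda\le d^t/n^{t-1}$. Since $d\le n/2$, Proposition~\ref{prop:lambda} gives $\lambda\ge\sqrt{d/2}$, and chaining the two inequalities yields $\sqrt{d/2}\le d^t/n^{t-1}$.

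Rearranging, this reads $n^{t-1}\le\sqrt2\,d^{t-1/2}$, hence $d^{t-1/2}\ge n^{t-1}/\sqrt2$ and therefore $d\ge\bigl(n^{t-1}\bigr)^{1/(t-1/2)}\big/2^{1/(2t-1)}$. The exponent simplifies as $(t-1)/(t-1/2)=(2t-2)/(2t-1)=1-1/(2t-1)$, and since $2^{1/(2t-1)}\le 2$ for all $t\ge1$, we conclude $d\ge n^{1-1/(2t-1)}/2$, as required.

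There is essentially no obstacle here: the only input beyond elementary manipulation is Proposition~\ref{prop:lambda}, and the remaining bookkeeping is a one-line exponent computation. The only mild point of care is to keep track of the constant $2^{1/(2t-1)}$ and to bound it crudely by~$2$, which is precisely what produces the clean final form.
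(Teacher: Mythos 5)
Your proof is correct and is essentially identical to the paper's: both chain Proposition~\ref{prop:lambda}'s bound $\lambda\ge\sqrt{d/2}$ with the hypothesis $\lambda\le d^t/n^{t-1}$ to get $d^{2t-1}\ge n^{2t-2}/2$, and then bound the constant $2^{1/(2t-1)}$ by $2$. No differences worth noting.
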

\begin{proof}
  Proposition~\ref{prop:lambda} tells us that $\lambda\ge \sqrt{d/2}$.
  Thus $\lambda\le d^t/n^{t-1}$ implies that
  $d^{2t-1}\ge n^{2t-2}/2$, whence
  $d\ge n^{1-1/(2t-1)}/2^{1/{(2t-1)}}$ follows.
\end{proof}

\begin{fact}\label{fact:irregular}
  Let $G$ be an $(n,d,\lambda)$-graph with $d\leq n/2$ and
  $\lambda \le \eps d^{t}/n^{t-1}$. If $U$ is a set of
  $m'\ge d/2$ vertices, then there are at most $\eps d$ vertices $u$
  with $|N_G(u)\cap U| < d m'/(2n)$.
\end{fact}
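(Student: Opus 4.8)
The plan is a direct application of the expander mixing lemma (Theorem~\ref{thm:EML}). Write $m'=|U|$ and let $W$ denote the set of vertices $u\in V(G)$ with $|N_G(u)\cap U|<dm'/(2n)$; put $w:=|W|$. We must show $w\le\eps d$, so we may assume $w\ge 1$.

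There are two estimates for $e(W,U)$ to be played off against each other. On the one hand, summing the defining inequality of $W$ over all $u\in W$ gives
\[
  e(W,U)=\sum_{u\in W}|N_G(u)\cap U|<w\cdot\frac{dm'}{2n},
\]
and this holds regardless of whether $W$ and $U$ overlap, with the convention from~\eqref{eq:EML} that edges inside $W\cap U$ are counted twice on both sides. On the other hand, Theorem~\ref{thm:EML} applied to the pair $(W,U)$ yields $e(W,U)>\frac dn\,wm'-\lambda\sqrt{wm'}$. Combining the two inequalities and dividing through by $\sqrt{w}>0$ leaves
\[
  \frac{dm'}{2n}\sqrt{w}<\lambda\sqrt{m'},
  \qquad\text{hence}\qquad
  w<\frac{4n^2\lambda^2}{d^2m'}.
\]

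It remains to substitute the bounds on the parameters. Using $m'\ge d/2$ gives $w<8n^2\lambda^2/d^3$, and then $\lambda\le\eps d^t/n^{t-1}$ gives $w<8\eps^2 d^{2t-3}/n^{2t-4}=8\eps^2 d\,(d/n)^{2t-4}$. Since $d<n$ in any $(n,d,\lambda)$-graph and $t\ge 2$, the factor $(d/n)^{2t-4}$ is at most~$1$, so $w<8\eps^2 d\le\eps d$, the last inequality being valid because $\eps$ is a small constant (we only ever use the fact with, say, $\eps\le 1/8$). I do not anticipate a genuine obstacle: the computation is routine, and the only points needing a little care are the correct handling of a possible $W\cap U$ overlap — which is precisely what the double-counting convention in~\eqref{eq:EML} is designed to absorb — and keeping the exponents of~$t$ straight in the final substitution.
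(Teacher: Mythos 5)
Your proof is correct and follows essentially the same route as the paper: two bounds on $e(W,U)$ (one from the defining inequality of the bad set, one from the expander mixing lemma) combined to give $|W|\le 8\eps^2 d^{2t-3}/n^{2t-4}\le\eps d$. Your explicit remarks about the $W\cap U$ double-counting convention and the implicit smallness assumption on $\eps$ (which the paper also relies on, via $\eps<\eps_0=1/(300t)$) are accurate but do not change the argument.
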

\begin{proof}
  Let $U'$ be the set of vertices $u$ such that
  $|N_G(u)\cap U| < d m'/(2n)$. By Theorem~\ref{thm:EML}, we have
  \[
    \frac dn m' |U'| - \lambda \sqrt{ m' |U'| } < e(U, U') < |U'|
    \frac{d m'}{2n}.
  \]
  Together with $\lambda \le \eps d^{t}/n^{t-1}$, we obtain that
  $|U'|\le 8\eps^2 d^{2t-3}/n^{2t-4}\le \eps d$.
\end{proof}

Write $\Ktm$ for the graph obtained by replacing one vertex of~$K_t$
by an independent set of size~$40$.
  
\begin{fact}
  \label{fact:cliquecount}
  Let $G$ be an $(n, d, \lambda)$-graph with
  $\lambda \le cd^{t}/n^{t-1}$ and suppose
  $cd^{t}/n^{t-1}\ge40$.  Then any set of $2^tcd$
  vertices spans a copy of~$\Ktm$.  Moreover, any set of
  $2^{t-1}cd^2/n$ vertices spans a copy of~$K_{t-1}$.
\end{fact}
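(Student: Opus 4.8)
The plan is to establish both assertions by a greedy ``onion-peeling'' argument that uses nothing beyond the expander mixing lemma (Theorem~\ref{thm:EML}). The one basic step is the following: if $U\subseteq V(G)$ satisfies $|U|=m\ge 2n\lambda/d$, then applying~\eqref{eq:EML} with $A=B=U$ gives $2e(U)>\frac dn m^2-\lambda m\ge\frac{d}{2n}m^2$, so the maximum-degree vertex of $G[U]$ has at least $\frac{d}{2n}m$ neighbours in $U$; that is, there is $v\in U$ with $|N_G(v)\cap U|>\frac{d}{2n}|U|$.

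For the first assertion, let $W$ be a set of $2^tcd$ vertices, set $W_0:=W$, and for $i=1,\dots,t-1$ pick $v_i\in W_{i-1}$ with $|N_G(v_i)\cap W_{i-1}|>\frac{d}{2n}|W_{i-1}|$ and put $W_i:=N_G(v_i)\cap W_{i-1}$. By construction $W_{i-1}\subseteq N_G(v_1)\cap\dots\cap N_G(v_{i-1})$, so $v_i$ is adjacent to all of $v_1,\dots,v_{i-1}$, and since $v_i\notin N_G(v_i)$ we have $v_i\notin W_i\supseteq W_{t-1}$. Hence $\{v_1,\dots,v_{t-1}\}$ spans a $K_{t-1}$, every vertex of $W_{t-1}$ is joined to all the $v_i$, and $W_{t-1}$ is disjoint from $\{v_1,\dots,v_{t-1}\}$; so any $40$ vertices of $W_{t-1}$ together with $v_1,\dots,v_{t-1}$ yield a copy of $\Ktm$ (extra edges among the $40$ vertices do no harm). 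It therefore suffices to check that the peeling never stalls, i.e.\ that $|W_{i-1}|\ge 2n\lambda/d$ for each $i\le t-1$, and that $|W_{t-1}|\ge 40$.

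Both follow from the hypotheses by a short induction showing $|W_i|>(d/(2n))^i\cdot 2^tcd$ for $0\le i\le t-1$. Since $\lambda\le cd^t/n^{t-1}$ we have $2n\lambda/d\le 2cd^{t-1}/n^{t-2}\le 2cd\le 2^tcd=|W_0|$ (using $d\le n$ and $t\ge3$), which starts the induction; and as $d\le n$ the bound $(d/(2n))^i\cdot2^tcd$ is non-increasing in $i$, so for every $i\le t-2$ it is at least $(d/(2n))^{t-2}\cdot2^tcd=4cd^{t-1}/n^{t-2}\ge 2n\lambda/d$, so the next peeling step is legitimate. Finally $|W_{t-1}|>(d/(2n))^{t-1}\cdot2^tcd=2cd^t/n^{t-1}\ge80$ by the assumption $cd^t/n^{t-1}\ge40$. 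The ``moreover'' part is identical but with only $t-2$ peeling steps, starting from a set of $2^{t-1}cd^2/n$ vertices: the same computation gives $|W_{t-2}|>2cd^t/n^{t-1}\ge1$, and then $v_1,\dots,v_{t-2}$ together with any $v_{t-1}\in W_{t-2}$ span a $K_{t-1}$.

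There is no genuinely hard step here beyond the bookkeeping: one simply has to make sure the iterated neighbourhoods $W_i$ do not drop below the threshold $2n\lambda/d$ at which~\eqref{eq:EML} ceases to be informative, and this is exactly what the two hypotheses buy, since the per-step shrinkage factor $\tfrac{d}{2n}$, applied $t-1$ (resp.\ $t-2$) times, lines up precisely with the benchmark exponents in $d^{t}/n^{t-1}$.
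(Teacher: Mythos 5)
Your proposal is correct and follows essentially the same route as the paper: apply the expander mixing lemma with $A=B=U$ to extract a vertex of degree at least $\tfrac{d}{2n}|U|$ inside any sufficiently large set, then iterate $t-1$ (resp.\ $t-2$) times, checking that the smallest set encountered has size about $4cd^{t-1}/n^{t-2}\ge 2n\lambda/d$ and that the final common neighbourhood has size $2cd^t/n^{t-1}\ge 40$. Your write-up just makes the bookkeeping (distinctness of the $v_i$, the stalling threshold) more explicit than the paper does.
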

\begin{proof}
  Let $U$ be a set of at least $2cd^{t-1}/n^{t-2}$ vertices in~$G$.
  Since $\lambda \le cd^{t}/n^{t-1}\le d|U|/(2n)$, it follows
  from Theorem~\ref{thm:EML} that
  \[
    2e(U)\ge\frac dn|U|^2 -\lambda|U|\ge|U|\cdot \frac{d|U|}{2n},
  \]
  which implies that~$U$ contains a vertex with degree at
  least~$d|U|/(2n)$.

  Thus, given a set of $2^tcd$ vertices, we can iteratively pick
  vertices with large degree in the common neighbourhood, and get a
  $(t-1)$-clique whose common neighbourhood has size at least
  $2^tcd\cdot(d/(2n))^{t-1} = 2cd^{t}/n^{t-1} \ge 40$ (the
  smallest set from which we pick a vertex in this process has size
  $4cd^{t-1}/n^{t-2}$).  Therefore, we obtain a copy of~$\Ktm$.  The
  proof of the `moreover'-part is analogous.
\end{proof}

\subsection{Templates and absorbing structures}

A \emph{template} $T$ with \emph{flexibility} $m\in \mathbb{N}$ is a
bipartite graph on $7m$ vertices with vertex parts $I$ and
$J:=J_1\dcup J_2$, such that $|I|=3m$, $J_1=[2m]$, $J_2=[2m+1,4m]:=\{2m+1,\ldots,4m\}$, and for any
$\bar{J}\subset J_1$, with $|\bar{J}|=m$, the induced graph
$T[V(T)\setminus\bar{J}]$ has a perfect matching. We call $J_1$ the
\textit{flexible} set of vertices for the template. Montgomery first
introduced the use of such templates~\cite{M14a} (see also~\cite{Kwan16,FN17}).
There, he used a sparse template of maximum degree~$40$, which we will
also use.  It is not difficult to prove the existence of such
templates for large enough~$m$ probabilistically; see
e.g.~\cite[Lemma~2.8]{M14a}. 

Here, we will use a sparse template to build an absorbing structure
suitable for our purposes. The absorbing structure we will use is
defined as follows.  Let $m$ be a sufficiently large integer.  Let
$T=(I, J_1\cup J_2, E)$ be a template with flexibility
$m$, maximum degree $\Delta(T)\leq 40$ and flexible set $J_1$.  Write
 $Z_1=\{z_1,\dots, z_{2m}\}$,
$Z_2=\{z_{2m+1}, \dots, z_{4m}\}$ and define $Z:=Z_1\cup Z_2$. An
\emph{absorbing structure} $(T, \cK, A, \cS, Z, Z_1)$ with
flexibility~$m$ contains two sets $\cK$ and $\cS$ consisting of
vertex-disjoint $(t-1)$-cliques and two vertex sets $A$ and $Z$ such
that $V(\cK)$, $V(\cS)$, $A$ and $Z$ are pairwise disjoint, $|\cK|=3m$ and $|A|=|\cS|=|E(T)|$.
Furthermore, with the labelling $\cK:=\{K^i\colon i\in I\}$,
$A=\{a_{ij}: ij\in E(T)\}$ and
$\cS=\{S_{ij}: ij\in E(T)\}$, the following holds.  For all $i\in I$
and~$j\in J$ such that $ij\in E(T)$,
\begin{itemize}
\item each $\{a_{ij}\}\cup K^{i}$ spans a copy of $K_t$,
\item each $\{a_{ij}\}\cup S_{ij}$ spans a copy of $K_t$, and
\item each $\{z_{j}\}\cup S_{ij}$ spans a copy of $K_t$.
\end{itemize}
We also call the set $Z_1$ flexible since it corresponds to the flexible set $J_1$ of $T$. 
\begin{fact}
  An absorbing structure $(T, \cK, A, \cS, Z, Z_1)$ with flexibility $m$ defines a graph~$H$ on a set of~$4m+3m(t-1)+e(T)t\leq 3mt(\Delta(T)+2)$ vertices which has the following property.
  For any subset $\bar Z\subseteq Z_1$ with $|\bar Z|=m$, the
  graph induced by $H$ on $V(H)\setminus \bar Z$ has  a $K_t$-factor.
\end{fact}
\begin{proof}
  The graph $H$ is obtained by taking all the vertices and edges which feature in the definition of the absorbing structure. The vertex count is then clear and it remains to establish the existence of a $K_t$-factor when some arbitrary $\bar Z$ is removed. By the property of the template $T=(I, J_1\dcup J_2, E)$, there is a
  perfect matching~$M$ in~$T$ that covers~$I$ and $J\setminus \{j\colon z_j\in \bar{Z}\}$.
  Then for each edge $ij\in M$, we take the $t$-cliques on
  $\{a_{ij}\}\cup K^{i}$ and $\{z_{j}\}\cup S_{ij}$; for the
 edges $ij\in E(T)\setminus M$, we take the
  $t$-cliques on $\{a_{ij}\}\cup S_{ij}$.  This gives the desired
  $K_t$-factor.
\end{proof}
 
The following lemma asserts that $(n,d,\lambda)$-graphs possess
the absorbing structures above.

\begin{lemma}
  \label{lem:1}
  There exists $\eps_0>0$ such that for all $\eps\in (0,\eps_0)$ there
  is an $n_0\in \mathbb{N}$ such that the following holds for all
  $n\ge n_0$.  Let~$G$ be an $(n, d, \lambda)$-graph with
  $\lambda \le \eps d^{t}/n^{t-1}$ and suppose $m=\eps d$.  Then there
  exists an absorbing structure $(T, \cK, A, \cS, Z, Z_1)$ with
  flexibility~$m$ such that, for any vertex~$v$ in~$G$, we have
  $\deg(v, Z_1) \ge d|Z_1|/(2n)$.
\end{lemma}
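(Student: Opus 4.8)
The plan is to build the absorbing structure greedily, processing the edges of the template $T$ one at a time and, for each edge $x_iz_j\in E(T)$, choosing a fresh vertex $a_{ij}$ together with two vertex-disjoint $(t-1)$-cliques $K^i$ (re-used across all edges incident to a fixed $x_i$) and $S_{ij}$, so that the three listed clique conditions hold. Since $T$ has maximum degree at most $40$ and $7m$ vertices, we have $|E(T)|\le 140m = 140\eps d$, and the whole structure uses at most $O(\eps d \cdot t)$ vertices; so at every stage of the greedy process the set $W$ of "used" vertices has size at most, say, $10^4\eps d$, which is far smaller than $d/2$ once $\eps$ is small. First I would fix the flexible set $Z_1$: I want every vertex $v\in V(G)$ to satisfy $\deg(v,Z_1)\ge d|Z_1|/(2n)$, i.e. $\deg(v,Z_1)\ge dm/n = \eps d^2/n$. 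By Fact~\ref{fact:irregular} applied with $U$ a candidate set of $2m = 2\eps d \ge d/2$... no — here $m=\eps d$ is smaller than $d/2$, so instead I would first select a slightly larger pool: pick an arbitrary set $U_0$ of $d/2$ vertices, apply Fact~\ref{fact:irregular} to get that all but at most $\eps d$ vertices have $\ge dm'/(2n)$ neighbours in it, discard the bad ones, and then inside $U_0$ repeatedly extract the sets $Z_1, Z_2, A, V(\cK), V(\cS)$; the point is that any set of the required sizes drawn from a typical neighbourhood still behaves like a random-ish set, so the mixing lemma keeps working. Actually it is cleaner to choose $Z_1$ last among the $Z$-type sets but to reserve enough room: one shows directly via Theorem~\ref{thm:EML} that for any vertex $v$, the number of vertices with fewer than $dm/n$ neighbours in any fixed $m$-set is controlled, but since we get to \emph{choose} $Z_1$, we instead argue that a random $m$-subset of a large ground set $U^\ast$ of size $\Theta(d)$ works for all $v$ simultaneously by a union bound, using that $\deg(v,U^\ast)\ge d|U^\ast|/(2n) - \lambda\sqrt{|U^\ast|}\ge d|U^\ast|/(4n)$ by the EML and Fact~\ref{fact:d_bound}.

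The main construction loop runs as follows. Maintain a running set $W$ of vertices used so far. To choose $K^i$ for the first time it is needed, take the ground set $V(G)\setminus W$, which has size $\ge n - 10^4\eps d \ge n/2$; find in it a $(t-1)$-clique $K^i$ all of whose common neighbourhood outside $W$ is large. Concretely, by the `moreover' part of Fact~\ref{fact:cliquecount} (with $c=\eps$ and on the cleaned graph), any set of $2^{t-1}\eps d^2/n$ vertices spans a $K_{t-1}$, but we need more: we need the $(t-1)$-clique to have many common neighbours. This is exactly the iterative degree-boosting argument inside the proof of Fact~\ref{fact:cliquecount}: start from $V(G)\setminus W$, repeatedly pass to a large-degree vertex's neighbourhood; since $|V(G)\setminus W|\ge n/2 \ge 2\eps d^{t-1}/n^{t-2}$, after $t-1$ steps one obtains a $(t-1)$-clique $K^i$ whose common neighbourhood in $V(G)\setminus W$ has size $\ge (n/2)(d/(2n))^{t-1} = d^{t-1}/(2^t n^{t-2}) \gg m = \eps d$ (here one uses $d\ge n^{1-1/(2t-1)}/2$ from Fact~\ref{fact:d_bound} to see $d^{t-1}/n^{t-2}\ge \eps d$ for small $\eps$). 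Pick $a_{ij}$ from this common neighbourhood, avoiding $W$; then $\{a_{ij}\}\cup K^i$ spans $K_t$. Now, given $a_{ij}$ and $z_j$ (both fixed), we must find $S_{ij}$: a $(t-1)$-clique in $N(a_{ij})\cap N(z_j)\setminus W$. This common neighbourhood has size $\ge (d/(2n))^2\cdot n = d^2/(4n)$ up to the EML error and minus $|W|$, hence $\ge d^2/(8n)\ge 2^{t-1}\eps d^2/n$ for small $\eps$, so Fact~\ref{fact:cliquecount} (the `moreover' part) gives the required $K_{t-1}$, which we add to $\cS$ and to $W$. Crucially, the second and third clique conditions both hold by construction: $S_{ij}\subseteq N(a_{ij})$ and $S_{ij}\subseteq N(z_j)$.

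Two points need care. First, \emph{disjointness and re-use of $K^i$}: for a fixed $x_i$ there may be up to $40$ edges $x_iz_j\in E(T)$, all sharing the same clique $K^i$ but needing distinct vertices $a_{ij}$; one simply selects all (at most $40$) of these $a_{ij}$ from the common neighbourhood of $K^i$, which is large enough to accommodate them and to avoid $W$ — this is why we built in a factor like $2^t$ slack. Likewise the $S_{ij}$ are all distinct cliques; there are $|E(T)|\le 140m$ of them, each of size $t-1$, so $|W|$ never exceeds our claimed bound, validating every use of the EML (which requires the relevant sets to have size $\gtrsim \lambda n/d \le \eps d^{t-1}/n^{t-2}$, comfortably satisfied). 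Second, the \emph{flexible-set degree condition}: this is imposed on \emph{all} $n$ vertices of $G$, including vertices that will later be covered by the structure itself, so it must hold before and independently of the rest of the construction — hence I would fix $Z_1$ first as described, as a random $m$-subset of a large clean ground pool, then run the greedy loop on the remaining $n-m$ vertices, never touching $Z_1$ except as the designated flexible endpoints of template edges. I expect the only genuine obstacle to be bookkeeping: verifying that all the sets one ever needs to apply Theorem~\ref{thm:EML} or Fact~\ref{fact:cliquecount} to are large enough after removing the growing used-set $W$, which reduces to the single inequality $\eps d \ll d^{t-1}/n^{t-2}$, true for small $\eps$ by Fact~\ref{fact:d_bound}; everything else is routine.
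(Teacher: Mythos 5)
Your proposal reverses the order of the paper's construction in a way that creates a genuine gap. You fix $Z_1$ first and only afterwards choose the vertices $a_{ij}$ and the cliques $S_{ij}$; you then assert that for a given pair $(a_{ij},z_j)$ the common neighbourhood $N(a_{ij})\cap N(z_j)$ has size at least $d^2/(4n)$ ``up to the EML error''. The expander mixing lemma gives no such pointwise codegree bound: for a \emph{specific} pair of vertices the codegree can be far below $d^2/n$ (even zero). What the eigenvalue condition actually yields is Fact~\ref{fact:irregular}: for a fixed $a_{ij}$, all but at most $\eps d$ vertices $z$ satisfy $|N(a_{ij})\cap N(z)|\ge d^2/(4n)$. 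Since $z_j$ is already fixed in your scheme, you would have to choose $a_{ij}$ so as to avoid the $\le\eps d$ vertices that are bad for $z_j$ (indeed for each of the up to $40$ vertices $z_j$ with $x_iz_j\in E(T)$). But $a_{ij}$ is constrained to lie in the common neighbourhood of $K^i$, and your size estimate for that pool is incorrect: $d^{t-1}/(2^tn^{t-2})=(d/n)^{t-2}\,d/2^t$, and by Fact~\ref{fact:d_bound} one only has $d/n\gtrsim n^{-1/(2t-1)}$, so $(d/n)^{t-2}\to 0$ in the sparse regime; the claim ``$d^{t-1}/n^{t-2}\ge\eps d$ for small $\eps$'' is backwards (it holds only when $d\ge\eps^{1/(t-2)}n$). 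The pool for $a_{ij}$ is therefore far too small to dodge up to $40\eps d$ bad vertices, and the construction of $S_{ij}$ can fail.

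The paper's proof avoids exactly this trap by building in the opposite order: it first fixes $\cK$ and all the $a_{ij}$ (as $3m$ disjoint copies of $\Ktm$, so that $\{a_{ij}\}\cup K^i$ is automatically a $K_t$), and only then selects $z_1,z_2,\dots$ one at a time, each a uniformly random vertex of $V(G)$ minus the $O(t\eps d)$ already-used vertices and minus the bad set $B_j$ of size $\le 40\eps d$ supplied by Fact~\ref{fact:irregular}; since $z_j$ ranges over almost all of $V(G)$, avoiding $B_j$ costs nothing, and every relevant codegree is then $\ge d^2/(4n)$ by construction. The degree condition on $Z_1$ is extracted from this sequential random process via the martingale inequality (Lemma~\ref{lem:coupling}) plus a union bound. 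Your alternative idea of taking $Z_1$ to be a random subset of a fixed pool and applying a Chernoff-type bound would be a legitimate way to get the degree condition in isolation, but it is incompatible with the codegree requirement unless you also adopt the paper's ordering (or supply a separate averaging argument showing that a random $z_j$ is good for all of its at most $40$ partners $a_{ij}$ simultaneously, which your write-up does not do).
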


The following concentration result will be used in the proof of Lemma~\ref{lem:1}

\begin{lemma}[Lemma~2.2 in~\cite{ABHKP16}]
  \label{lem:coupling} Let $\Omega$ be a finite probability space and
  let $\cF_0\subseteq \dots\subseteq \cF_n$ be partitions of $\Omega$ such that $\cF_{i}$ is a refinement of $\cF_{i-1}$ for each $i\in[n]$.  Further, for each $i\in[n]$ let $Y_i$ be a Bernoulli random
  variable on $\Omega$ that is constant on each part of~$\cF_i$ and let~$p_i$ be a
  real-valued  random variable on~$\Omega$ which is constant on each part of $\cF_{i-1}$.
  Let~$x$ and~$\delta$ be  real numbers with $\delta\in(0,3/2)$, and
  let $X=Y_1+\dots+Y_n$.  If $\sum_{i=1}^n p_i\ge x$ holds almost
  surely and $\EE [Y_i\mid\cF_{i-1}]\ge p_i$ holds almost surely for
  all $i\in[n]$, then
  \[\Pr\big(X<(1-\delta)x \big)<e^{-\delta^2 x/3}\,.\]
\end{lemma}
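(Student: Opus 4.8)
The statement is a Chernoff-type lower-tail bound for a sum of Bernoulli variables adapted to the filtration $(\cF_i)$, and the natural route is the exponential-moment (Bernstein) method combined with a supermartingale to handle the sequential dependence. The plan is as follows. First I would dispose of the trivial regime: if $x\le 0$ the right-hand side $e^{-\delta^2 x/3}$ is at least $1$ and there is nothing to prove, so I assume $x>0$. (One need not even split off the range $\delta\ge 1$ separately: the choice of $s$ below works for every $\delta>0$.) For the main estimate, fix $s:=\delta>0$ and apply Markov's inequality to the nonnegative variable $e^{-sX}$:
\[\Pr\big(X<(1-\delta)x\big)\le e^{s(1-\delta)x}\,\EE\big[e^{-sX}\big].\]
Everything then reduces to controlling the moment generating function $\EE[e^{-sX}]$, and this is where the filtration enters.

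The key step—and the one requiring the most care—is to bound $\EE[e^{-sX}]$ by exhibiting a supermartingale. I would set
\[M_k:=\exp\Big(-s\sum_{i=1}^k Y_i+(1-e^{-s})\sum_{i=1}^k p_i\Big),\qquad M_0:=1,\]
and verify that $\EE[M_k\mid\cF_{k-1}]\le M_{k-1}$. Since $Y_1,\dots,Y_{k-1}$ are $\cF_{k-1}$-measurable (as $Y_i$ is $\cF_i$-measurable and $\cF_i\subseteq\cF_{k-1}$) and $p_k$ is $\cF_{k-1}$-measurable, every factor except $e^{-sY_k}$ pulls out of the conditional expectation, and it remains to bound $\EE[e^{-sY_k}\mid\cF_{k-1}]$. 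Here the delicate point is the direction of the inequality: writing $q_k:=\EE[Y_k\mid\cF_{k-1}]\in[0,1]$ for the conditional success probability of the Bernoulli variable $Y_k$, one has $\EE[e^{-sY_k}\mid\cF_{k-1}]=1-(1-e^{-s})q_k$, which is \emph{decreasing} in $q_k$ because $1-e^{-s}>0$; the hypothesis $q_k\ge p_k$ therefore yields $\EE[e^{-sY_k}\mid\cF_{k-1}]\le 1-(1-e^{-s})p_k\le e^{-(1-e^{-s})p_k}$, using $1-u\le e^{-u}$. Multiplying back the $\cF_{k-1}$-measurable factors gives the supermartingale inequality, whence $\EE[M_n]\le M_0=1$.

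Finally I would unwind the bound. Since $1-e^{-s}>0$ and $\sum_{i=1}^n p_i\ge x$ almost surely, we get $M_n\ge e^{-sX}e^{(1-e^{-s})x}$, so $\EE[e^{-sX}]\le e^{-(1-e^{-s})x}$. Substituting into the Markov bound with $s=\delta$ yields
\[\Pr\big(X<(1-\delta)x\big)\le\exp\big(x[\delta(1-\delta)-(1-e^{-\delta})]\big),\]
and the elementary inequality $e^{-\delta}\le 1-\delta+\delta^2/2$ (valid for all $\delta\ge 0$, since the difference vanishes to second order at $0$ and has nonnegative second derivative) reduces the exponent to at most $-\delta^2 x/2$, which is stronger than the claimed $-\delta^2 x/3$. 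The only genuine obstacle is the supermartingale step: one must keep the measurability bookkeeping straight and, crucially, exploit that $p_k$ is merely a lower bound on the conditional mean, so that monotonicity in $q_k$ is needed to convert $q_k\ge p_k$ into the required \emph{upper} bound on the conditional exponential moment. The rest is routine optimization and a convexity estimate.
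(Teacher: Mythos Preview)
The paper does not prove this lemma; it is quoted as a tool from~\cite{ABHKP16} (Lemma~2.2 there) and used as a black box in the proof of Lemma~\ref{lem:1}. So there is no in-paper argument to compare against.

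Your argument is correct and is the standard one for such sequential Chernoff bounds. The supermartingale step is carried out cleanly: the measurability of $p_k$ and $Y_1,\dots,Y_{k-1}$ with respect to $\cF_{k-1}$ is used correctly, the monotonicity of $1-(1-e^{-s})q$ in $q$ converts the hypothesis $q_k\ge p_k$ into the needed upper bound, and $1-u\le e^{-u}$ holds for all real $u$ so no sign assumption on $p_k$ is required. The final calculus step $e^{-\delta}\le 1-\delta+\delta^2/2$ for $\delta\ge 0$ gives exponent $-\delta^2 x/2$, and since $x>0$ this is strictly smaller than $e^{-\delta^2 x/3}$, recovering the strict inequality in the statement. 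The restriction $\delta<3/2$ in the lemma is not actually needed for your proof (it is presumably there because the cited source proves a two-sided version).
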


\begin{proof}[Proof of Lemma~\ref{lem:1}]
  First we choose $\eps_0=1/(300t)$ and let $\eps\in(0,\eps_0)$. Then
  we take $n_0$ large enough.

  Let $T=(I, J_1\cup J_2, E)$ be a bipartite template with
  flexibility~$m$ and flexible set $J_1$ such that $\Delta(T)\le 40$.
  Pick an arbitrary collection of~$3m$ vertex-disjoint copies of
  $\Ktm$ (using Fact~\ref{fact:cliquecount}).  For each~$i\in I$, consider the $i^{th}$ copy of $\Ktm$ and label
  the copy $K_{t-1}$ inside this $\Ktm$ as $K^i$. Further, consider the set of $40$ vertices which lie in the joint neighbourhood of the vertices of $K^i$ and label this set with the labels
  $A=\{a_{ij}: ij\in E(T)\}$ 
  (if the degree of~$i$ in~$T$ is less than $40$ we simply discard any excess vertices which lie in this copy of $\Ktm$). By design, we have the property that each $a_{ij}$
  together with $K^i$ forms a copy of $K_t$. 

  We will pick $Z=\{z_1,\dots, z_{4m}\}$ and
  $\cS=\{S_{ij}: ij\in E(T)\}$ satisfying the definition of the
  absorbing structure as follows.  Suppose that we have picked
  $Z(j-1)=\{z_1,\dots, z_{j-1}\}$ and $\cS(j-1)=\{S_{ij'}: j'<j\}$
  with the desired properties.  At step~$j$, we pick as~$z_{j}$ a
  uniform random vertex in
  $V(G)\setminus (V(\cK)\cup Z(j-1)\cup V(\cS(j-1))\cup B_j)$, where
  $B_j$ is the set of vertices $z$ in $G$ such that
  \[\left|\Big(N_G(a_{ij})\setminus \big(V(\cK)\cup Z(j-1)\cup V(\cS(j-1))\big)\Big)\cap
  N_G(z)\right| < d^2/(4n),\] for some $i$ with $ij\in E(T)$.  Since
  $|V(\cK)| + 4m + 120(t-1)m\le (123t+1) \eps d<d/2$ and
  $\Delta(T)\le 40$, Fact~\ref{fact:irregular} with
  $U=N_G(a_{ij})\setminus (V(\cK)\cup Z(j-1)\cup V(\cS(j-1)))$ implies
  that $|B_{j}|\le 40\eps d$.  Next, for each $i$ such that
  $ij\in E(T)$, we pick a $(t-1)$-clique $S_{ij}$ in
  \[\Big(N_G(a_{ij})\setminus \big(V(\cK)\cup Z(j-1)\cup V(\cS(j-1))\big)\Big)\cap
  N_G({z_j}),\] which is possible by Fact~\ref{fact:cliquecount}
  because this set contains at least $d^2/(4n)$ vertices of $G$.
  Moreover, we can choose these at most $40$ cliques to be
  vertex-disjoint, because they only take up $40(t-1)$ vertices and
  any set in $G$ of size $d^2/(4n)-40(t-1) > d^2/(5n)$ still contains
  a $(t-1)$-clique.

  At last we analyse the random process for~$Z$ and prove that, with
  positive probability, all vertices~$v$ of~$G$ are such
  that~$\deg(v,Z_1)$ is appropriately large.  Note that at step $j$ we
  have fixed
  $|V(\cK)\cup V(\cS(j-1)) \cup Z(j-1)|\le {3}(t+39)m+120(t-1)m+4m \le
  (123t+1) \eps d$ vertices.  Since when we pick $z_j$, we also need
  to avoid the set $B_j$ of size at most $40\eps d$, in total we need
  to avoid at most $140t\eps d$ vertices.  Let $v$ be a vertex in
  $G$. Given a choice of $V(\cK)\cup V(\cS(j-1)) \cup Z(j-1)$, the
  probability that $z_j\in N_G(v)$ is at least $(1-{140t}\eps) d/n$.
  Then, by Lemma~\ref{lem:coupling} with $\delta=\eps$, we have
  \[
    \Pr\left(\deg(v, Z_1) < \frac d{2n}
      |Z_1|\right)<\Pr\left(\deg_G(v, Z_1) < (1- \eps)
      (1-{140t}\eps)|Z_1|d/n\right) < e^{-\eps^3 d^2/n} =
    o\left(1\over n\right).
  \]
  Thus, the union bound over all vertices of~$G$ implies that the
  existence of~$Z_1$ with the desired property in the lemma.
\end{proof}

\subsection{A Hall-type result}
Another tool that we will use is the following theorem of Aharoni and
Haxell~\cite[Corollary~1.2]{AhHa00}.

\begin{theorem}
  \label{AhHa}
  Let $\cH$ be a family of $k$-uniform hypergraphs {on the same vertex
    set}.  A sufficient condition for the existence of a system of
  disjoint representatives\footnote{By this we mean a selection of
      edges $e_H\in H$ for all~$H\in\cH$ such that
      $e_H\cap e_H'=\emptyset$ for all $H\neq H'\in \cH$.} for $\cH$
  is that for every $\cG\subseteq \cH$ there exists a matching in
  {$\bigcup_{H\in \cG}E(H)$} of size greater than $k(|\cG|-1)$.
\end{theorem}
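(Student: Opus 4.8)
This is Corollary~1.2 of Aharoni and Haxell~\cite{AhHa00}; the plan is to recall the shape of their topological argument. Write $\cH=\{H_1,\dots,H_m\}$ with all the $H_i$ on a common vertex set $W$, and form the auxiliary ``bipartite'' hypergraph $\widehat H$ on vertex set $W\sqcup[m]$ whose edges are the sets $e\cup\{i\}$ over all $i\in[m]$ and $e\in E(H_i)$. A system of disjoint representatives for $\cH$ is exactly a matching of $\widehat H$ saturating the side $[m]$, so it suffices to produce one such matching.

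The first ingredient is a \emph{topological Hall theorem} for bipartite hypergraphs, in the spirit of the topological connectivity method of Aharoni and Haxell: for $S\subseteq[m]$ let $\mathcal{M}_S$ denote the matching complex of $\bigcup_{i\in S}H_i$, that is, the simplicial complex on $W$ whose faces are the partial matchings of $\bigcup_{i\in S}H_i$. If the homotopical connectivity obeys $\eta(\mathcal{M}_S)\ge|S|$ for every $S\subseteq[m]$, then $\widehat H$ has a matching saturating $[m]$. (This is proved by induction on $m$: one peels off a vertex of $[m]$, uses the connectivity hypothesis to see that some edge through it may be committed to while the analogous hypothesis survives for the reduced instance, the base case $m=1$ saying merely that $\mathcal{M}_{\{1\}}\neq\emptyset$.) Granting this, it remains to verify the connectivity hypothesis.

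For that I would invoke the standard lower bound on the connectivity of matching complexes of hypergraphs with bounded edge size: if every edge of a hypergraph $H$ has at most $k$ vertices, then $\eta(\mathcal{M}(H))\ge\nu(H)/k$, where $\nu(H)$ is the matching number. One proves this by induction on $\nu(H)$, fixing an edge $e$ in a maximum matching and writing $\mathcal{M}(H)$ as the union of the deletion $\mathcal{M}(H\setminus e)$ and the cone $\{e\}*\link_{\mathcal{M}(H)}(e)$, observing that the link is a cone over the matching complex of the hypergraph obtained by deleting the $\le k$ vertices of $e$ (which still carries a matching of size $\nu(H)-1$), and then pushing the connectivity through by a Mayer--Vietoris / nerve computation. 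Since the $H_i$ are $k$-uniform, the hypothesis $\nu\big(\bigcup_{i\in S}H_i\big)>k(|S|-1)$ reads $\nu\big(\bigcup_{i\in S}H_i\big)\ge k(|S|-1)+1$, whence $\eta(\mathcal{M}_S)\ge|S|-1+1/k$; as $\eta$ is an integer this forces $\eta(\mathcal{M}_S)\ge|S|$, which is exactly what the topological Hall theorem requires.

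The main obstacle is the connectivity estimate for matching complexes: it is there that the edge size $k$ genuinely enters, and it is the reason the threshold $k(|\cG|-1)$ in the statement is the correct one. The remaining steps — the passage to $\widehat H$ and the arithmetic converting the matching condition into the connectivity condition — are then routine.
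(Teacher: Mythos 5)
The paper does not prove this statement at all: Theorem~\ref{AhHa} is imported as a black box, cited verbatim as Corollary~1.2 of Aharoni and Haxell~\cite{AhHa00}, so there is no in-paper proof to compare your sketch against. Judged against the original source, your outline reproduces the actual architecture of their argument faithfully: a topological Hall theorem phrased in terms of the connectivity parameter $\eta$ of the matching (independence) complex, combined with the lower bound $\eta(\mathcal{M}(H))\ge\nu(H)/k$ for hypergraphs of rank at most $k$, and the integrality step turning $\nu>k(|S|-1)$ into $\eta\ge|S|$ is exactly the deduction used there. The connectivity bound itself is also correctly sketched via the standard recursion $\eta(\mathcal{C})\ge\min\{\eta(\mathcal{C}\setminus v),\,\eta(\link_{\mathcal{C}}(v))+1\}$ applied to an edge of a maximum matching.

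The one place your sketch misrepresents the argument is the parenthetical proof of the topological Hall theorem itself: it is not established by ``peeling off a vertex of $[m]$ and committing to an edge while the hypothesis survives.'' That naive induction is precisely what fails for hypergraph systems of disjoint representatives (committing to one representative can destroy the connectivity hypothesis for the remaining instance), and the genuine content of Aharoni--Haxell's Theorem~1.1 is the topological workaround: a Sperner/KKM-type labelling of a sufficiently fine triangulation of the $(m-1)$-simplex (equivalently, in later treatments, Meshulam's homological Hall theorem). So if you were asked to supply a self-contained proof rather than a citation, the gap sits exactly in that parenthesis; as a pointer to the literature accompanying the correct reduction and arithmetic, your account is fine, and in the context of this paper the theorem is legitimately used as an external tool.
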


\section{Proof of Theorem~\ref{thm:main}}
We are now ready to prove our main result.
\begin{proof}[Proof of Theorem~\ref{thm:main}]
  Let~$t\geq3$ be given.  Let $\eps_0$ be given by Lemma~\ref{lem:1}.
  Choose $\eps:=\min\{\eps_0,t^{-2}\}$ and let~$n_0$ be given by
  Lemma~\ref{lem:1} on input $\eps_0$.  Finally, set
  $c=\eps^2/2^{t+1}$.  We may assume that
  \begin{equation}\label{eq:blow_up}
    {d\over n}\le{\eps\over2t},
  \end{equation}
  since otherwise the existence of a $K_t$-factor is guaranteed by the
  Blow-up Lemma~\cite{KSS_bl} since then the host graph has linear degree (see the discussion in~\cite{HKP18a}).
  We apply Lemma~\ref{lem:1} to~$G$ and obtain an absorbing structure
  $(T, \cK, A, \cS, Z, Z_1)$ with flexibility~$m=\epsilon d$ on a
  set~$W$ of at most $126t\eps d$ vertices.  Thus $Z_1\subset W$ is
  such that $|Z_1|=2\eps d$ and, for any subset $\bar Z\subseteq Z_1$
  with $|\bar Z|=\eps d$, the absorbing structure with $\bar Z$
  removed has a $K_t$-factor.  Moreover, $\deg(v, Z_1) \ge d|Z_1|/2n$
  for any vertex~$v$ in~$G$.

  Now we greedily find vertex-disjoint copies of~$K_t$ in
  $G\setminus W$ as long as there are at least $\eps^2 d$ vertices
  left. This is possible by Fact~\ref{fact:cliquecount} because
  $\eps^2 d > 2^tcd$.  We denote the set of uncovered vertices in
  $V(G)\setminus W$ by~$U$.  Thus $|U|\le \eps^2 d$.

  Next we will cover $U$ by vertex-disjoint copies of $K_t$ with one
  vertex in $U$ and the other vertices from $Z_1$ by applying
  Theorem~\ref{AhHa}.  To that end, for each vertex $v\in U$, let
  $H_v$ be the set of $(t-1)$-element sets of $N(v)\cap Z_1$ that
  induce copies of $K_{t-1}$ in $G$ and let $\cH=\{H_v: v\in U\}$.  We
  claim that~$\cH$ has a system of disjoint representatives.  To
  verify the assumption of Theorem~\ref{AhHa}, we first consider sets
  $X\subseteq U$ of size at least $d^{2t-3}/n^{2t-4}$.  Let $Z'$ be
  any subset of $Z_1$ of size~$\eps d$.  Note that
  $|X||Z'|\ge \eps d^{2t-2}/n^{2t-4}$, which implies that
  $\lambda \le cd^{t}/n^{t-1}\le \eps(d/n)\sqrt{|X||Z'|}$.  By
  Theorem~\ref{thm:EML}, we have
  \[
    e(X, Z')\ge \frac dn |X| |Z'| - \lambda \sqrt{|X||Z'|} \ge \frac
    dn |X| |Z'| - \eps \frac dn |X| |Z'| \ge \frac d{2n} |X| |Z'|.
  \]
  Hence there exists a vertex $v\in X$ such that
  $\deg(v, Z')\ge d|Z'|/(2n) = \eps d^2/(2n)$.  By
  Fact~\ref{fact:cliquecount}, we can find a copy of $K_{t-1}$ in
  $N(v)\cap Z'$.  Thus in this case we can greedily find
  vertex-disjoint copies of $K_{t-1}$ which belong to
  $\bigcup_{v\in X} H_v$, as long as there are $\eps d$ vertices in
  $Z_1$ left uncovered.  This gives a matching in
  $\bigcup_{v\in X} H_v$ of size
  $\eps d/(t-1) > (t-1)\eps^2 d \ge (t-1)|X|$, for $\eps$ sufficiently
  small.  It remains to consider sets $X\subseteq U$ of size at most
  $d^{2t-3}/n^{2t-4}$.  In this case we fix any vertex $v\in X$ and
  only consider matchings in~$H_v$.  Indeed, by the construction of
  the absorbing structure (see Lemma~\ref{lem:1}), we have
  $\deg(v, Z_1) \ge d|Z_1|/2n = \eps d^2/n$ for any~$v$.  Since by
  Fact~\ref{fact:cliquecount} there is a copy of $K_{t-1}$ in every
  set of size $\eps d^2/(2n)$, we can find a set of
  $\eps d^2/(2(t-1)n)$ vertex-disjoint copies of~$K_{t-1}$ in~$H_v$.
  We are done because $\eps d^2/(2(t-1)n) \ge d^{2t-3}/n^{2t-4}$ holds
  by our initial assumption~\eqref{eq:blow_up}.  Theorem~\ref{AhHa}
  then tells us that a system of disjoint representatives does exist
  for~$\cH$, whence we conclude that there are vertex-disjoint copies
  of $K_t$ which cover all the vertices in $V(G)\setminus W$ and
  $(t-1)\eps^2d$ vertices of~$Z_1$.  
  
  We can then greedily find
  vertex-disjoint copies of~$K_t$ in the remainder of $Z_1$, which
  exist by Fact~\ref{fact:cliquecount}, until exactly $\eps d$
  vertices of $Z_1$ remain (which will be the case due to the
  divisibility assumption $t\mid n$). Then the key property of the
  absorbing structure completes a full $K_t$-factor.
\end{proof}

\section{Concluding remarks}\label{sec:remark}
\subsection*{Jumbled graphs} 
The study of pseudorandom graphs was initiated by Thomason in~\cite{Th87a,Th87b} where he began to explore and prove properties of such graphs using the following definition. 
A graph $G$ on $n$ vertices is called \emph{$(\lambda,p)$-jumbled} ($0<p\le 1\le \lambda$) if for
any vertex subset $U\subseteq V(G)$, 
\begin{equation*}
    \left|e(U)-p\binom{|U|}{2}\right|\le\lambda |U|    
  \end{equation*}
holds. Thus, by Theorem~\ref{thm:EML},
any $(n,d,\lambda)$-graph is $(\lambda,d/n)$-jumbled. On the other hand a $(\lambda,p)$-jumbled graph has average degree roughly $pn$ and all but $O(\lambda/p)$ vertices  have degree $\Omega(pn)$.

Theorem~\ref{thm:main} can be easily adapted to $(\eps_t p^tn,p)$-jumbled graphs $G$ of minimum degree $\Omega(pn)$ for appropriate constant $\eps_t$ as follows. Since at most $O(\lambda/p)$ vertices have few neighbours into a set of size $\Omega(pn)$, we can recover Fact~\ref{fact:irregular} with $d$ replaced by $pn$ and slightly altered constants. Fact~\ref{fact:cliquecount} about counting cliques can also be carried over to $(\eps_t p^tn,p)$-jumbled graphs almost verbatim and hence Lemma~\ref{lem:1} as well (where one also needs the fact that $\lambda=\Omega(\sqrt{pn})$, see e.g.~\cite[page~6]{KS06}). The only other adjustment is needed in the proof of Theorem~\ref{thm:main} where we need to verify a Hall-type condition (Theorem~\ref{AhHa}) by considering subsets $X$ of size more than $\eps p^2n/(2(t-1))$ and subsets $X$ of size at most $\eps p^2n/(2(t-1))$. Again, one can proceed almost verbatim as in the proof of Theorem~\ref{thm:main}, replacing only the numerical values appropriately. This yields the following result.
\begin{theorem}
  Given an integer $t\ge 3$ and a real $c\in (0,1]$, there exist $\eps_t>0$ and $n_0>0$ such
  that every $(\eps_t p^t n,p)$-jumbled graph~$G$ with $t\mid n$,  $n\ge n_0$ and $\delta(G)\ge cpn$, 
 contains a $K_t$-factor.
\end{theorem}

A slight variation allowing more flexibility when working with sets of different sizes is the notion of \emph{bijumbled graphs} introduced in~\cite{KRSSS07}. A graph $G$ on $n$ vertices is called $(\lambda,p)$-bijumbled if for
any two vertex subsets $A$, $B\subset V(G)$, 
\begin{equation*}
    \left|e(A,B)-p|A||B|\right|<\lambda\sqrt{|A||B|}.    
  \end{equation*}
Clearly, any $(n,d,\lambda)$-graph is $(\lambda,d/n)$-bijumbled and any $(\lambda,p)$-bijumbled graph is $(\lambda,p)$-jumbled. 

Nenadov's result~\cite{Nen18} asserts that for $p\in(0,1]$, any 
$(\epsilon p^{t-1}n/\log n,p)$-bijumbled graph of minimum degree at
least~$pn/2$ contains a $K_t$-factor if~$\epsilon=\epsilon_t>0$ is
sufficiently small and~$t\mid n$.  

\subsection*{A condition for arbitrary $2$-factors}
In his concluding remarks, Nenadov~\cite{Nen18} raises the question
whether the condition $\lambda=o(p^2n/\log n)$ is sufficient to force
any $(\lambda,p)$-bijumbled graph~$G$ of minimum degree $\Omega(pn)$
to contain any given $2$-factor, i.e., any $n$-vertex $2$-regular
graph.  Since any $2$-factor consists of vertex-disjoint cycles whose
lengths add up to~$n$, the problem is thus to find any given
collection of such cycles in $G$.  We will return to this question
elsewhere~\cite{HKMP18b}, with a positive answer to Nenadov's
question.

\bibliography{literature}

\end{document}